\documentclass[11pt,a4paper]{amsart}

\usepackage{amsmath,amssymb,amsthm,amscd,verbatim,enumerate}
\usepackage{graphicx,subfigure,color}

\definecolor{darkred}{rgb}{0.6,0,0}
\definecolor{darkblue}{rgb}{0,0,0.6}
\usepackage[colorlinks=true,citecolor=darkred,linkcolor=darkblue,urlcolor=blue]{hyperref}
\usepackage[lmargin=31mm,rmargin=31mm,bmargin=31mm,tmargin=31mm]{geometry}

\newcommand{\bigh}{\mathcal{H}}

\theoremstyle{plain}
\newtheorem{theorem}{Theorem}[section]
\newtheorem{lemma}[theorem]{Lemma}
\newtheorem{corollary}[theorem]{Corollary}

\theoremstyle{definition}

\title[General Positions Subsets and Independent Hyperplanes]{\boldmath General Position Subsets and\\ Independent Hyperplanes in $d$-Space}

\author{Jean Cardinal, Csaba D. T\'oth, and David R. Wood}
\thanks{Research of Wood is supported by the Australian Research Council.}
\begin{document}
\maketitle
\sloppy

\begin{abstract}
Erd\H{o}s asked what is the maximum number $\alpha(n)$ such that every set of $n$ points in the plane with no four on a line contains $\alpha(n)$ points in general position.
We consider variants of this question for $d$-dimensional point sets and generalize previously known bounds. In particular, we prove the following two results for fixed $d$:
\begin{itemize}
\item Every set $\bigh$ of $n$ hyperplanes in $\mathbb{R}^d$ contains a subset $S\subseteq \bigh$ of size at least $c \left(n \log n\right)^{1/d}$, for some constant $c=c(d)>0$, such that no cell of the arrangement of $\bigh$ is bounded by hyperplanes of $S$ only.
\item Every set of $cq^d\log q$ points in $\mathbb{R}^d$, for some constant $c=c(d)>0$, contains a subset of $q$ cohyperplanar points or $q$ points in general position.
\end{itemize}
Two-dimensional versions of the above results were respectively proved by Ackerman et al. [\emph{Electronic J. Combinatorics}, 2014] and by Payne and Wood [\emph{SIAM J. Discrete Math.}, 2013].
\end{abstract}

\section{Introduction}

\paragraph{\bf Points in general position.}
A finite set of points in $\mathbb{R}^d$ is said to be in \emph{general position} if no hyperplane contains more than $d$ points. Given a finite set of points $P\subset \mathbb{R}^d$ in which at most $d+1$ points lie on a hyperplane, let $\alpha (P)$ be the size of a largest subset
of $P$ in general position. Let $\alpha (n, d) = \min \{ \alpha (P): |P|=n\}$.

For $d=2$, Erd\H{o}s~\cite{Erd86} observed that $\alpha(n,2)\gtrsim\sqrt{n}$ and proposed the determination of $\alpha(n,2)$ as an open problem\footnote{We use the shorthand notation $\lesssim$ to indicate inequality up to a constant factor for large $n$. Hence $f(n)\lesssim g(n)$ is equivalent to $f(n)\in O(g(n))$, and $f(n)\gtrsim g(n)$ is equivalent to $f(n)\in \Omega (g(n))$.}. F\"uredi~\cite{F91} proved $\sqrt{n\log n} \lesssim \alpha (n,2)\leq o(n)$, where the lower bound uses independent sets in Steiner triple systems, and the upper bound relies on the density version of the Hales-Jewett Theorem~\cite{FK89,FK91}. F\"uredi's argument combined with the quantitative bound for the density Hales-Jewett problem proved in the first polymath project~\cite{polymath} yields $\alpha (n, 2) \lesssim n / \sqrt{\log^* n}$ (Theorem~\ref{thm:alpha2}).

Our first goal is to derive upper and lower bounds on $\alpha(n,d)$ for fixed $d\geq 3$. We prove that the multi-dimensional Hales-Jewett theorem~\cite{FK91} yields $\alpha(n,3)\in o(n)$ (Theorem~\ref{thm:alpha3}). But for $d\geq 4$, only the trivial upper bound $\alpha(n,d)\in O(n)$ is known. We establish lower bounds $\alpha(n,d)\gtrsim (n\log n)^{1/d}$ in a dual setting of hyperplane arrangements in $\mathbb{R}^d$ as described below.

\medskip

\paragraph{\bf Independent sets of hyperplanes.}
For a finite set $\bigh$ of hyperplanes in $\mathbb{R}^d$, Bose et~al.~\cite{BCCHKLT13} defined
a hypergraph $G(\bigh)$ with vertex set $\bigh$ such that the hyperplanes containing the facets of each cell of the arrangement of $\bigh$ form a hyperedge in $G(\bigh)$. A subset $S\subseteq \bigh$ of hyperplanes is called \emph{independent} if it is an independent set of $G(\bigh)$; that is, if no cell of the arrangement of $\bigh$ is bounded by hyperplanes in $S$ only. Denote by $\beta(\bigh)$ the maximum size of an independent set of $\bigh$, and let $\beta (n, d) := \min \{ \beta (\bigh) : |\bigh| = n \}$.

The following relation between $\alpha (n, d)$ and $\beta (n, d)$ was observed by Ackerman et~al.~\cite{AP12} in the case $d=2$.

\begin{lemma}[Ackerman et~al.~\cite{AP12}]\label{lem:AP12}
For $d\geq 2$ and $n\in \mathbb{N}$, we have $\beta (n, d) \leq \alpha(n, d)$.
\end{lemma}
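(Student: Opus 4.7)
The plan is to show that for every set $P \subset \mathbb{R}^d$ of $n$ points with at most $d+1$ on any hyperplane, one can produce an arrangement $\bigh$ of $n$ hyperplanes in $\mathbb{R}^d$ with $\beta(\bigh) \leq \alpha(P)$; minimizing over $P$ then yields $\beta(n,d) \leq \alpha(n,d)$. Concretely, I will take $\bigh$ to be a small generic perturbation of the point--hyperplane dual $P^* = \{p^* : p \in P\}$, together with a canonical bijection $p \leftrightarrow h_p$ between $P$ and $\bigh$.

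Under the standard duality, $k$ points of $P$ are cohyperplanar if and only if the $k$ corresponding dual hyperplanes share a common intersection point. The hypothesis on $P$ therefore implies that every multi-hyperplane concurrency in $P^*$ involves exactly $d+1$ hyperplanes meeting at a single point $v$, and around each such $v$ there is an open neighborhood $B_v$ that avoids all the other hyperplanes of $P^*$. I now perturb each $p^*$ slightly in a generic direction to produce $h_p$, choosing the perturbation size $\epsilon$ small enough that: (i) the perturbed $(d+1)$-tuple at each $v$ lies in general affine position and hence bounds a small $d$-simplex cell $\sigma_v$ contained in $B_v$; and (ii) no other hyperplane of $\bigh$ enters $B_v$. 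If necessary, a preliminary generic affine transformation of $P$ first places the $d+1$ normals at each $v$ in general linear position, so that a generic perturbation actually creates a bounded $d$-simplex. Finiteness of the cohyperplanar $(d+1)$-tuples in $P$ guarantees that a single small enough $\epsilon$ handles them all at once.

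Given this $\bigh$, let $S \subseteq \bigh$ be any independent set and set $T = \{p \in P : h_p \in S\}$. If $T$ contained $d+1$ cohyperplanar points with concurrency point $v$, then $\sigma_v$ would be a cell of the full arrangement of $\bigh$ whose bounding hyperplanes all belong to $S$, contradicting the independence of $S$. Hence $T$ is in general position, so $\alpha(P) \geq |T| = |S|$; choosing $S$ of maximum size gives $\beta(\bigh) \leq \alpha(P)$ as required. I expect the main technical obstacle to be the perturbation step: one must choose a single $\epsilon$ and perturbation direction uniformly for all concurrency points, guaranteeing a bounded simplex in each case and ruling out accidental intrusion by outside hyperplanes. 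This reduces to two standard facts---that there are only finitely many cohyperplanar $(d+1)$-tuples in $P$, and that $d+1$ hyperplanes in general affine position in $\mathbb{R}^d$ bound a unique $d$-simplex---so no deeper difficulty is expected.
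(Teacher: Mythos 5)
Your proposal is correct and follows essentially the same route as the paper: dualize $P$, perturb so that each concurrent $(d+1)$-tuple of dual hyperplanes bounds a small simplicial cell, and observe that an independent set of hyperplanes then corresponds to a subset of $P$ in general position. You simply spell out the perturbation step in more detail than the paper does.
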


\begin{proof}
For every set $P$ of $n$ points in $\mathbb{R}^d$ in which at most $d+1$ points lie on a
hyperplane, we construct a set $\bigh$ of $n$ hyperplanes in $\mathbb{R}^d$ such that
$\beta (\bigh )\leq \alpha (P)$. Consider the set $\bigh_0$ of hyperplanes obtained from
$P$ by duality. Since at most $d+1$ points of $P$ lie on a hyperplane, at most $d+1$ hyperplanes
in $\bigh_0$ have a common intersection point. Perturb the hyperplanes in $\bigh_0$ so that every
$d+1$ hyperplanes that intersect forms a simplicial cell, and denote by $\bigh$ the resulting set of hyperplanes. An independent subset of hyperplanes corresponds to a subset in general position in $P$. Thus $\alpha (P)\geq \beta (\bigh)$.
\end{proof}

Ackerman et~al.~\cite{AP12}  proved that $\beta (n, 2) \gtrsim \sqrt{n \log n}$, using a result by Kostochka et~al.~\cite{KMV} on independent sets in bounded-degree hypergraphs. Lemma~\ref{lem:AP12} implies that any improvement on this lower bound would immediately improve F\"uredi's lower bound for $\alpha(n,2)$. We generalize the lower bound to higher dimensions by proving that $\beta(n,d)\gtrsim (n \log n)^{1/d}$ for fixed $d\geq2$ (Theorem~\ref{thm:is}).

\medskip

\paragraph{\bf Subsets either in General Position or in a Hyperplane.}
We also consider a generalization of the first problem, and define $\alpha (n, d, \ell)$, with a slight abuse of notation, to be the largest integer such that every set of $n$ points in $\mathbb{R}^d$ in which at most $\ell$ points lie in a hyperplane contains a subset of $\alpha (n, d, \ell)$ points in general position.
Note that $\alpha (n, d) = \alpha (n, d, d+1)$ with this notation, and every set of $n$ points in $\mathbb{R}^d$ contains $\alpha(n,d,\ell)$ points in general position or $\ell+1$ points in a hyperplane.

Motivated by a question of Gowers~\cite{Gowers-GenPos}, Payne and Wood~\cite{PW13} studied $\alpha(n,2,\ell)$;  that is,
the minimum, taken over all sets of $n$ points in the plane with at most $\ell$ collinear, of the
maximum size a subset in general position. They combine the Szemer\'edi-Trotter Theorem~\cite{ST83} with lower bounds on maximal independent sets in bounded-degree hypergraphs to prove $\alpha (n, 2, \ell)\gtrsim \sqrt{n\log n / \log \ell}$. We generalize their techniques, and show that for fixed $d\geq2$ and all $\ell\lesssim \sqrt{n}$, we have $\alpha (n, d, \ell)\gtrsim (n / \log \ell)^{1/d}$ (Theorem~\ref{thm:main}). It follows that every set of at least $Cq^d\log q$ points in $\mathbb{R}^d$, where $C=C(d)>0$ is a sufficiently large constant, contains $q$ cohyperplanar points or $q$ points in general position (Corollary~\ref{cor:Ramsey3d}).

\section{Subsets in General Position and the Hales-Jewett Theorem}

Let $[k] := \{1, 2, \ldots ,k\}$ for every positive integer $k$.
A subset $S\subseteq [k]^m$ is a \emph{$t$-dimensional combinatorial subspace}
of $[k]^m$ if there exists a partition of $[m]$ into sets $W_1,W_2,\ldots ,W_t, X$ such that $W_1, W_2,\ldots ,W_t$ are nonempty, and $S$ is exactly the set of elements $x\in [k]^m$ for which $x_i=x_j$ whenever $i,j\in W_\ell$ for some $\ell\in [t]$, and $x_i$ is constant if $i\in X$. A one-dimensional combinatorial subspace is called a \emph{combinatorial line}.

To obtain a quantitative upper bound for $\alpha(n,2)$, we combine F\"uredi's argument with the quantitative version of the density Hales-Jewett theorem for $k=3$ obtained in the first polymath project.

\begin{theorem}[Polymath~\cite{polymath}]
\label{thm:dhj3}
The size of the  largest subset of $[3]^m$ without a combinatorial line is $O(3^m/\sqrt{\log^* m})$.
\end{theorem}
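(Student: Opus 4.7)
The plan is to prove Theorem~\ref{thm:dhj3} via a density-increment iteration, the strategy pursued by the Polymath collaboration. Suppose $A\subseteq[3]^m$ is line-free with density $\delta := |A|/3^m$. The aim is to pass to a combinatorial subspace $S\subseteq[3]^m$, of dimension still reasonably large in terms of $m$, on which the relative density of $A$ strictly exceeds $\delta+\eta(\delta)$ for an explicit increment $\eta(\delta)=\Omega(\delta^C)$. Since density is bounded by $1$, the iteration must terminate in $O(1/\eta(\delta))$ rounds, and tracking how the dimension shrinks at each round yields the claimed $\log^*$-type dependence.

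First I would replace the uniform measure on $[3]^m$ by the \emph{equal-slices} measure, sampled by drawing a uniform random ordering of $[3m]$ and declaring the first $m$ positions to be $1$s, the next $m$ to be $2$s, and the last $m$ to be $3$s. This measure is invariant under the natural action permuting symbols and coordinates, and it interacts cleanly with random restrictions to combinatorial subspaces; the quantitative transfer back to the uniform density is routine.

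The core step is a density-increment lemma: if $A$ is line-free, then $A$ correlates non-trivially with an \emph{insensitive set}, a set of the form $E\cap F$ where membership in $E$ depends only on the positions of $1$s versus $2$s and membership in $F$ only on the positions of $2$s versus $3$s. The proof invokes an equal-slices analog of Sperner's theorem together with the Ajtai--Szemer\'edi corners theorem as a black box, applied to the ``$ij$-lines'' of $A$: in the absence of such a correlation, averaging over random combinatorial lines in the $12$- and $23$-directions would directly produce a line in $A$. Once the insensitive correlation is in hand, partitioning $[3]^m$ into subspaces on which $E$ and $F$ are constant and applying pigeonhole yields a subspace on which the density of $A$ exceeds $\delta+c\delta^C$, at the cost of reducing the ambient dimension from $m$ to something like $\log^{(h(\delta))}m$ for a slowly growing $h$.

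The hard part, where the Polymath paper is genuinely novel, is the correlation lemma itself. A direct Fourier argument fails because $[3]^m$ lacks an abelian-group structure compatible with combinatorial lines, so one must proceed combinatorially through equal-slices Sperner statements and the corners theorem. Considerable care is needed to keep the increment polynomial in $\delta$ rather than exponential; this polynomial dependence, combined with the tower-type loss in dimension at each iteration, is precisely what distinguishes the stated $\sqrt{\log^* m}$ bound from the far weaker Ackermann-type quantity that a naive iteration would produce.
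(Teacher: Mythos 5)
The paper does not actually prove Theorem~\ref{thm:dhj3}: it is quoted from the Polymath project and used as a black box, so there is no internal proof to compare against. Your outline does correctly identify the architecture of the genuine Polymath argument --- density increment over combinatorial subspaces, the equal-slices measure, correlation of a line-free set with an intersection of a $12$-insensitive and a $23$-insensitive set via an equal-slices Sperner statement and the Ajtai--Szemer\'edi corners theorem, and a $\log^*$-type loss from iterated dimension reduction. But as a proof it has a genuine gap: the two statements that carry all the weight, namely the correlation lemma and the partition of $[3]^m$ into subspaces on which an insensitive set is essentially constant, are asserted rather than proved, and together they constitute essentially the entire content of the Polymath paper. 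Moreover, since the whole point of the theorem is the quantitative bound, the endgame cannot be waved at: the exponent $C$ in your increment $\eta(\delta)=\Omega(\delta^C)$, together with the number of iterated logarithms lost per increment step, is precisely what determines the exponent on $\log^* m$; you leave $C$ unspecified, so your argument as written only yields ``$O(3^m/(\log^* m)^{c})$ for some unspecified $c>0$,'' not the stated $O(3^m/\sqrt{\log^* m})$.

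There is also a concrete error in a step you treat as routine: your description of the equal-slices measure is wrong. Uniformly ordering $[3m]$ and assigning the three consecutive blocks of $m$ positions the symbols $1,2,3$ samples a uniformly random point of the central slice of $[3]^{3m}$; it is not a measure on $[3]^m$ at all. Equal-slices on $[3]^m$ first chooses the composition $(a,b,c)$ with $a+b+c=m$ uniformly among all compositions and then a uniform string with those symbol counts. The distinction is not cosmetic, because the Sperner-type and random-restriction arguments you invoke rely on the slice distribution being uniform over compositions rather than concentrated near the central slice.
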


\begin{theorem}\label{thm:alpha2}
$\alpha (n, 2) \lesssim n / \sqrt{\log^* n}$.
\end{theorem}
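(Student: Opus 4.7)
The plan is to carry out F\"uredi's reduction~\cite{F91}, substituting the quantitative density Hales-Jewett bound of Theorem~\ref{thm:dhj3} for the qualitative version he used. In outline, I would build a point set $P_m \subset \mathbb{R}^2$ of size $3^m$, with no four collinear, whose collinear triples are in bijection with the combinatorial lines of $[3]^m$, and then feed this configuration into Theorem~\ref{thm:dhj3}.

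The construction of $P_m$ is the main obstacle. I would proceed by induction on $m$, seeking an injection $\phi_m:[3]^m\to\mathbb{R}^2$ whose image has (a) no four collinear points, (b) precisely the images of combinatorial lines of $[3]^m$ as collinear triples, and (c) the middle member of each such triple equal to the midpoint of the other two. The base case $m=1$ is three collinear points. For the inductive step, set $\phi_m(i,x):=A\phi_{m-1}(x)+b_i$ with a common linear part $A$ and translations $b_1,b_2,b_3$ satisfying $b_2=(b_1+b_3)/2$; a short case analysis on whether the varying set of a combinatorial line of $[3]^m$ contains the first coordinate confirms that (b) and (c) are preserved (combinatorial lines within a single copy come from affine images of collinear triples, while combinatorial lines across the three copies come from the midpoint relation on both the $\phi_{m-1}$-layer and on the $b_i$'s). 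The delicate part is eliminating \emph{spurious} coincidences: each of the finitely many candidate non-combinatorial three- or four-point collinearities imposes a nontrivial polynomial equation on the entries of $A$ and the $b_i$, so a sufficiently generic choice of parameters avoids all of them simultaneously.

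With $P_m$ in hand, a subset $S\subseteq P_m$ is in general position if and only if $\phi_m^{-1}(S)$ contains no combinatorial line, so Theorem~\ref{thm:dhj3} yields $\alpha(P_m)=O\bigl(3^m/\sqrt{\log^* m}\bigr)$. To handle an arbitrary $n$, let $m$ be minimal with $3^m\geq n$ and select any $n$-element subset $P\subseteq P_m$; then $P$ inherits property~(a), and any general-position subset of $P$ is a general-position subset of $P_m$ of the same size. Hence $\alpha(n,2)\leq\alpha(P)\leq\alpha(P_m)=O(n/\sqrt{\log^* n})$, using $3^m\leq 3n$ and $\log^* m=\log^* n-O(1)$.
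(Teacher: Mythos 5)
Your overall strategy is the same as the paper's: realize $[3]^m$ as a planar point set with no four collinear in which combinatorial lines become collinear triples, and then invoke Theorem~\ref{thm:dhj3}. One intermediate claim, however, is false as stated. Property~(b) --- that the collinear triples of $P_m$ are \emph{precisely} the images of combinatorial lines --- is unachievable in the presence of property~(c), and the assertion that every non-combinatorial three-point collinearity imposes a \emph{nontrivial} polynomial condition on $A$ and the $b_i$ is wrong. The midpoint relation forces every $3$-term arithmetic progression of the grid to remain collinear, and these strictly contain the combinatorial lines: already in $[3]^2$ the triple $(1,3),(2,2),(3,1)$ satisfies $\phi_2(2,2)=\tfrac12\left(\phi_2(1,3)+\phi_2(3,1)\right)$ identically, because $b_2=\tfrac12(b_1+b_3)$ and $\phi_1(2)=\tfrac12(\phi_1(1)+\phi_1(3))$; yet it is not a combinatorial line, since a combinatorial line requires all wildcard coordinates to take the same value. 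The corresponding polynomial vanishes identically, so no generic choice of parameters eliminates this triple.

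Fortunately the theorem only needs one inclusion. If every combinatorial line maps to a collinear triple and no four points of $P_m$ are collinear, then any general-position subset of $P_m$ pulls back to a combinatorial-line-free subset of $[3]^m$, and Theorem~\ref{thm:dhj3} gives the bound; extra collinear triples only strengthen the conclusion. The ``no four collinear'' requirement is attainable, because a Euclidean line meets $\{1,2,3\}^m\subset\mathbb{R}^m$ in at most three points (any nonconstant coordinate of an affine parametrization is injective in the parameter), so four-point coincidences really are non-identities and genericity disposes of them. The paper obtains all of this in one step by taking a generic linear projection of $[3]^m\subset\mathbb{R}^m$ to the plane, which preserves exactly the grid's collinearities; your recursive affine construction is a more laborious route to essentially the same point set. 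Your reduction from general $n$ to $n=3^m$ at the end is correct.
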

\begin{proof}
Consider the $m$-dimensional grid $[3]^m$ in $\mathbb{R}^m$ and project it onto $\mathbb{R}^2$ using a generic projection; that is, so that three points in the projection are collinear if and only if their preimages in $[3]^m$ are collinear. Denote by $P$ the resulting planar point set and let $n=3^m$. Since the projection is generic, the only collinear subsets of $P$ are projections of collinear points in the original $m$-dimensional grid, and $[3]^m$ contains at most three collinear points. From Theorem~\ref{thm:dhj3}, the largest subset of $P$ with no three collinear points has size at most the indicated upper bound.
\end{proof}

To bound $\alpha (n,3)$, we use the multidimensional version of the density Hales-Jewett Theorem.

\begin{theorem}[see \cite{FK89,polymath}]
\label{thm:dmhj}
For every $\delta>0$ and every pair of positive integers $k$ and $t$, there exists a positive integer
$M := M(k, \delta, t)$ such that for every $m>M$, every subset of $[k]^m$ of density at least $\delta$ contains
a $t$-dimensional subspace.
\end{theorem}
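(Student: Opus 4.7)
The plan is to reduce the general case to the one-dimensional density Hales--Jewett theorem (the special case $t=1$), by induction on $t$. The base $t=1$ is the classical DHJ for alphabet $[k]$: the case $k=3$ is Theorem~\ref{thm:dhj3}, and the analogue for arbitrary $k$ is established in the same polymath project.

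For the inductive step, assume the statement holds for $t-1$ at every density. Given $A\subseteq[k]^m$ with $|A|\geq\delta k^m$, I split the coordinate set as $[m]=I\sqcup J$ with $|I|=m_1$ and $|J|=m_2$, choosing $m_1>M(k,\delta/2,t-1)$ first and postponing the choice of $m_2$. For each $y\in[k]^J$, define the fiber $A_y=\{x\in[k]^I:(x,y)\in A\}$. A standard averaging argument yields a set $B\subseteq[k]^J$ of density at least $\delta/2$ on which $|A_y|\geq(\delta/2)k^{m_1}$. By the inductive hypothesis, each such $A_y$ contains a $(t-1)$-dimensional combinatorial subspace $S_y$ of $[k]^I$. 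The total number of $(t-1)$-dimensional combinatorial subspaces of $[k]^I$ is bounded by some $K=K(k,t,m_1)\leq(kt)^{m_1}$, so by pigeonhole there is a single subspace $S$ and a subset $B'\subseteq B$ of density at least $\delta/(2K)$ in $[k]^J$ with $S\subseteq A_y$ for every $y\in B'$.

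Now choose $m_2>M(k,\delta/(2K),1)$ and apply one-dimensional DHJ to $B'$ to obtain a combinatorial line $L\subseteq B'$ in $[k]^J$. Then $S\times L$ is a $t$-dimensional combinatorial subspace of $[k]^m$: its $t$ variable blocks are the $t-1$ variable blocks of $S$ on $I$ together with the single variable block of $L$ on $J$, and every point $(x,y)\in S\times L$ lies in $A$ because $y\in B'$ and $S\subseteq A_y$. The main obstacle is parameter bookkeeping: the density $\delta/(2K)$ required for the final one-dimensional application depends on $K$, which depends on $m_1$, which is itself fixed by the inductive hypothesis using density $\delta/2$. Fixing $m_1$ before $m_2$ breaks the apparent circularity, and the resulting $M(k,\delta,t):=m_1+m_2$ is finite.
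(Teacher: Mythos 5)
The paper does not actually prove Theorem~\ref{thm:dmhj}: it is quoted from Furstenberg--Katznelson and the polymath project, so there is no in-paper argument to compare against. Your reduction is correct and is one of the two standard ways of deducing the multidimensional statement from the one-dimensional density Hales--Jewett theorem. The averaging step giving a set $B$ of density $\delta/2$ of fibers of density $\delta/2$, the pigeonhole over the at most $(k+t-1)^{m_1}$ combinatorial subspaces of $[k]^{m_1}$, and the order of quantifiers (fix $m_1$ from the inductive hypothesis, let $K$ depend on $m_1$, then choose $m_2$ depending on $K$) all check out; and $S\times L$ is indeed a $t$-dimensional combinatorial subspace in the sense defined in Section~2, its blocks being the $t-1$ wildcard blocks of $S$ on $I$ together with the wildcard block of $L$ on $J$. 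One point worth making explicit: the base case $t=1$ for general alphabet $[k]$ is the full strength of the density Hales--Jewett theorem, so what you have is a reduction of the multidimensional version to DHJ$(k)$ rather than a self-contained proof --- which is entirely appropriate here, since the theorem is cited rather than proved. For comparison, the polymath write-up obtains the multidimensional version even more directly: identifying $[k^t]^m$ with $[k]^{tm}$, a combinatorial line over the alphabet $[k^t]$ becomes a $t$-dimensional combinatorial subspace over $[k]$, so MDHJ$(k,t)$ follows from DHJ$(k^t)$ in a single step, without induction on $t$. Your argument trades that alphabet blow-up for an induction with a pigeonhole loss in the density, which is also perfectly fine since no quantitative bound is claimed in the statement.
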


\begin{theorem}\label{thm:alpha3}
$\alpha (n, 3) \in o(n)$.
\end{theorem}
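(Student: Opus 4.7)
The plan is to mimic the proof of Theorem~\ref{thm:alpha2}, with the grid $[3]^m$ replaced by the Boolean hypercube $[2]^m$ and Theorem~\ref{thm:dhj3} replaced by the multidimensional Theorem~\ref{thm:dmhj} applied with $k = t = 2$. First I would regard $\{0,1\}^m = [2]^m$ as a set of $2^m$ points in $\mathbb{R}^m$ and apply a sufficiently generic linear projection $\pi\colon \mathbb{R}^m \to \mathbb{R}^3$ to obtain a set $P = \pi([2]^m)$ of size $n = 2^m$. Genericity of $\pi$ guarantees that for any collection of at most four points of $P$, coplanarity in $\mathbb{R}^3$ is equivalent to the preimages in $\{0,1\}^m$ lying in a $2$-flat of $\mathbb{R}^m$.

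The key structural step will be to show that at most $4$ vertices of $[2]^m$ lie in any $2$-flat of $\mathbb{R}^m$; this both verifies that $P$ satisfies the hypothesis required for $\alpha(\cdot)$ and identifies the $4$-coplanar configurations in $P$ with ``parallelogram'' quadruples $v_1 + v_2 = v_3 + v_4$ in $[2]^m$. I would prove the claim by contradiction: given five purportedly coplanar vertices, examine the four differences to one of them (which lie in the $2$-dimensional linear subspace parallel to the $2$-flat), take two as a basis, and expand the other two; a short per-coordinate analysis exploiting that each $v_{i,k} \in \{0,1\}$ forces the expansion coefficients into $\{0,1\}^2$, which leaves only four possible values for each expansion and hence contradicts the distinctness of the five vertices.

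With this in hand, any subset $S \subseteq P$ in general position in $\mathbb{R}^3$ (no $4$ coplanar) pulls back to a subset $T \subseteq [2]^m$ with no four points in a common $2$-flat. In particular $T$ contains no $2$-dimensional combinatorial subspace of $[2]^m$, since such a subspace consists of the four points $\{u,\, u + e_{W_1},\, u + e_{W_2},\, u + e_{W_1 \cup W_2}\}$, which manifestly lie in the $2$-flat $u + \mathrm{span}(e_{W_1}, e_{W_2})$. Applying Theorem~\ref{thm:dmhj} with $k = t = 2$: for every $\delta > 0$ there exists $M(\delta)$ such that every subset of $[2]^m$ of density at least $\delta$ contains a $2$-dimensional combinatorial subspace whenever $m > M(\delta)$. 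Hence $|T|/2^m \to 0$ as $m \to \infty$, so $\alpha(P) = o(2^m) = o(n)$.

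For values of $n$ not of the form $2^m$, I would place two generic disjoint translates of the construction in $\mathbb{R}^3$ and truncate to exactly $n$ points; the ``at most $4$ coplanar'' property is preserved by genericity across copies, and $\alpha$ at most doubles, which is absorbed into the $o(n)$ bound. I expect the main technical obstacle to be the case analysis establishing ``at most $4$ coplanar in $[2]^m$''; the Hales-Jewett step itself is then a direct analogue of the $d = 2$ argument.
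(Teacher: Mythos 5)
Your proposal is correct and follows essentially the same route as the paper: project $[2]^m$ generically into $\mathbb{R}^3$ and apply the density Hales--Jewett theorem with $k=t=2$, using the fact that a $2$-flat meets the hypercube in at most $4$ vertices. You supply more detail than the paper does (the proof that at most four vertices of $[2]^m$ are coplanar, and the padding argument for general $n$), but the underlying idea is identical.
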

\begin{proof}
Consider the $m$-dimensional hypercube $[2]^m$ in $\mathbb{R}^m$ and project it onto $\mathbb{R}^3$ using a generic
projection. Let $P$ be the resulting point set in $\mathbb{R}^3$ and let $n:=2^m$.
Since the projection is generic, the only coplanar subsets of $P$ are projections of points of the $m$-dimensional grid $[2]^m$ lying in a two-dimensional subspace. Therefore $P$ does not contain more than four coplanar points.
From Theorem~\ref{thm:dmhj} with $k=t=2$, for every $\delta>0$ and sufficiently large $m$,
every subset of $P$ with at least $\delta n$ elements contains $k^t = 4$ coplanar points.
Hence every independent subset of $P$ has $o(n)$ elements.
\end{proof}

We would like to prove $\alpha (n, d) \in o(n)$ for fixed $d$. However, we cannot apply the same
technique, because an $m$-cube has too many co-hyperplanar points, which remain co-hyperpanar in projection. By the multidimensional Hales-Jewett theorem, every constant fraction of vertices of a hi-dimensional hypercube has this property. It is a coincidence that a projection of a hypercube to $\mathbb{R}^d$ works for $d=3$, because $2^{d-1}=d+1$ in that case.

\section{Lower Bounds for Independent Hyperplanes}

We also give a lower bound on $\beta (n, d)$ for $d\geq2$. By a simple charging argument (see Cardinal and Felsner~\cite{CF14}), one can establish that $\beta (n, d)\gtrsim n^{1/d}$. Inspired by the recent result of Ackerman et al.~\cite{AP12}, we improve this bound by a factor of $(\log n)^{1/d}$.

\begin{lemma}
\label{lem:degree}
Let $\bigh$ be a finite set of hyperplanes in $\mathbb{R}^d$.
For every subset of $d$ hyperplanes in $\bigh$, there are are most
$2^d$ simplicial cells in the arrangement of $\bigh$ such that all $d$
hyperplanes contain some facets of the cell.
\end{lemma}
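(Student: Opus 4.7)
The plan is to show that any simplicial cell bounded by all $d$ chosen hyperplanes shares a single common vertex—namely, the intersection point of the $d$ hyperplanes—and then use a local orthant count at that point to bound the number of such cells.

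First I would exploit the combinatorics of a $d$-simplex. A simplicial cell $C\subset\mathbb{R}^d$ has $d+1$ facets and $d+1$ vertices, and exactly $d$ facets meet at each vertex. Fix $H_1,\ldots,H_d\in\bigh$, and suppose each $H_i$ contains a facet of $C$. The remaining $(d{+}1)$-st facet lies on some $H_{d+1}\in\bigh$. Then the $d$ facets on $H_1,\ldots,H_d$ must meet at the single vertex of $C$ opposite the facet on $H_{d+1}$, and that vertex belongs to $H_1\cap\cdots\cap H_d$. For this vertex to be an actual corner of a $d$-simplex, the hyperplanes $H_1,\ldots,H_d$ must intersect transversally in a single point $p$; if they do not (empty or higher-dimensional intersection), no simplicial cell has all of them on its boundary, and the bound $2^d$ is trivial.

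The next step is the orthant count. Choose linear forms $h_1,\ldots,h_d$ with $H_i=\{h_i=0\}$. A small ball around $p$ is partitioned by $H_1,\ldots,H_d$ into $2^d$ open orthants, indexed by sign patterns in $\{+,-\}^d$. Any cell $C$ of the arrangement with $p$ on its boundary lies on one specific side of each $H_i$ (it does not cross them), hence sits inside a single orthant. Within each orthant, at most one cell of the arrangement has $p$ on its closure—namely, the cell whose interior contains a small neighborhood of $p$ restricted to that orthant. Thus at most one simplicial cell per orthant can satisfy the requirements, giving at most $2^d$ in total.

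The main point to handle carefully is the possibility that additional hyperplanes of $\bigh$ pass through $p$. In that case, any cell with $p$ as a vertex would have more than $d$ arrangement hyperplanes meeting at $p$, contradicting simpliciality (a $d$-simplex has exactly $d$ facet-hyperplanes through each vertex), so no valid simplicial cell exists and the count is vacuously $0\leq 2^d$. Otherwise $H_1,\ldots,H_d$ are precisely the hyperplanes of $\bigh$ through $p$, and the orthant argument goes through cleanly, yielding the claimed bound.
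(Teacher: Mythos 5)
Your overall strategy---locate the common vertex $p=H_1\cap\cdots\cap H_d$ and count one candidate cell per orthant of the sub-arrangement $\{H_1,\dots,H_d\}$ around $p$---is exactly the paper's argument, and the first two paragraphs of your proof are fine. The problem is in your final paragraph, where you handle the case that further hyperplanes of $\bigh$ pass through $p$. You claim that in this case no simplicial cell can have $p$ as a vertex, ``contradicting simpliciality.'' That is false: a simplex has exactly $d$ facets through each of its vertices, but nothing prevents additional hyperplanes of the arrangement from passing through that vertex without containing a facet of the cell. For instance, in $\mathbb{R}^2$ take the lines $x=0$, $y=0$, $y=x$ and $x+y=1$; the cell $\{x>0,\ y>x,\ x+y<1\}$ is a triangle with a vertex at the origin, where three lines of the arrangement meet but only two of them ($x=0$ and $y=x$) carry edges of the triangle. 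Applied to the pair $\{x=0,\ y=x\}$, your case analysis would declare the count ``vacuously $0$,'' yet this triangle is a qualifying cell. So in the degenerate case your argument does not establish the bound at all; it merely asserts a count of zero that can be wrong.

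The repair is short and worth making explicit. If a simplicial cell $C$ has facets on all of $H_1,\dots,H_d$, those $d$ facets pairwise meet (any $d$ facets of a $d$-simplex share a vertex), so they meet at $p$, and the tangent cone of $C$ at $p$ is the \emph{full} closed orthant determined by $H_1,\dots,H_d$ containing $C$ --- precisely because the $d$ facets of $C$ at $p$ lie on these $d$ hyperplanes and on no others. Two distinct cells cannot both have the same full orthant as tangent cone at $p$, since their interiors would then overlap near $p$. Hence at most one qualifying cell per orthant, i.e.\ at most $2^d$ in total, regardless of how many extra hyperplanes of $\bigh$ pass through $p$ (an extra hyperplane cutting a given orthant simply means that orthant contributes no qualifying cell). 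This is the content of the paper's parenthetical remark that no two such cells can lie on the same side of all $d$ hyperplanes.
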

\begin{proof}
A simplicial cell $\sigma$ in the arrangement of $\bigh$ has exactly $d+1$ vertices, and exactly $d+1$ facets.
Any $d$ hyperplanes along the facets of $\sigma$ intersect in a single point, namely at a vertex of $\sigma$.
Every set of $d$ hyperplanes in $\bigh$ that intersect in a single point can contains $d$ facets of at most $2^d$ simplicial cells (since no two such cells can lie on the same side of all $d$ hyperplanes).
\end{proof}

The following is a reformulation of a result of Kostochka et al.~\cite{KMV}, that is similar to the reformulation of Ackerman et al.\cite{AP12} in the case $d=2$.

\begin{theorem}[Kostochka et al.~\cite{KMV}]
\label{thm:rdegree}
Consider an $n$-vertex $(d+1)$-uniform hypergraph $H$ such that every $d$-tuple of vertices is contained in at most $t=O(1)$ edges,
and apply the following procedure:
\begin{enumerate}
\item let $X$ be the subset of vertices obtained by choosing each vertex independently at random with probability $p$, such that
$pn = \left( n/(t \log \log \log n) \right)^{3/(3d-1)}$,
\item remove the minimum number of vertices of $X$ so that the resulting subset $Y$ induces a triangle-free linear\footnote{A hypergraph is \emph{linear} if it has no pair of distinct edges sharing two or more vertices.} hypergraph $H[Y]$.
\end{enumerate}
Then with high probability $H[Y]$ has an independent set of size at least $\left( \frac nt \log \frac nt \right)^{\frac 1d}$.
\end{theorem}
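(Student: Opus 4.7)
The plan is to reduce $H$ to a much smaller \emph{uncrowded} (triangle-free and linear) $(d+1)$-uniform hypergraph via random sampling followed by a deterministic cleaning step, and then to invoke the classical Ajtai--Koml\'os--Pintz--Spencer--Szemer\'edi-type bound: every uncrowded $(d+1)$-uniform hypergraph on $N$ vertices with maximum degree $D$ has independence number $\Omega\bigl(N(\log D)^{1/d}/D^{1/d}\bigr)$.

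The first step is to observe that the $d$-tuple degree bound propagates to at most $O(t\cdot n^{d-s})$ edges through any $s$-tuple of vertices, for $2\leq s\leq d$, and in particular to a vertex-degree bound of $O(t\cdot n^{d-1})$. After independent sampling with probability $p$, Chernoff gives $|X|\geq pn/2$ with high probability, and two kinds of obstruction inside $H[X]$ need to be controlled: (a) pairs of edges sharing exactly $s\geq 2$ vertices, surviving with probability $p^{2(d+1)-s}$; and (b) triangles, i.e., triples of edges pairwise sharing one vertex with pairwise-distinct intersection vertices, using $3d$ vertices in total and surviving with probability $p^{3d}$. Using the $s$-tuple degree bound, a direct count shows the triangle term $O(p^{3d}n^{3d-3}t^3)$ is dominant. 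Requiring it to be $o(pn)$ forces $(pn)^{3d-1}\lesssim (n/t)^{3}$, which is precisely what the prescribed value $pn=(n/(t\log\log\log n))^{3/(3d-1)}$ delivers, the triple logarithm providing slack for concentration.

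Since every obstruction can be killed by removing a single vertex, step~(2) deletes at most $o(pn)$ vertices with high probability, leaving $|Y|=(1-o(1))pn$ and $H[Y]$ uncrowded. The surviving maximum degree is $D\approx (pn)^{d}t/n=(n/t)^{1/(3d-1)}$, so the AKPSS-type bound yields
\[
\alpha(H[Y])\;\gtrsim\;\frac{|Y|(\log D)^{1/d}}{D^{1/d}}\;=\;(n/t)^{\,3/(3d-1)-1/(d(3d-1))}\bigl(\log(n/t)\bigr)^{1/d}\;=\;\Bigl(\tfrac{n}{t}\log\tfrac{n}{t}\Bigr)^{1/d},
\]
where the exponents collapse via the identity $3/(3d-1)-1/(d(3d-1))=1/d$.

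The main difficulty lies in the calibration of $p$: it must be small enough that the expected triangle count is negligible compared to $|X|$ (so that the cleaning step costs little), yet large enough that $|Y|/D^{1/d}$ actually delivers the full factor $(n/t)^{1/d}$. The linearity obstruction is strictly weaker, permitting $pn$ up to $(n/t)^{2/(2d-1)}$, so it is the triangle count that pins down the exponent $3/(3d-1)$ in the statement, and the triple logarithm is there to make the sampling-size and obstruction-count estimates hold with high probability simultaneously.
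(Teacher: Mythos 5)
The paper states this result as a citation to Kostochka, Mubayi and Verstra\"ete and gives no proof of its own, so there is nothing internal to compare against; judged on its own terms, your sketch is the standard (and surely the intended) argument --- random sparsification, deletion of $2$-cycles and triangles to reach an uncrowded hypergraph, then the Ajtai--Koml\'os--Pintz--Spencer--Szemer\'edi/Duke--Lefmann--R\"odl bound --- and all of your exponent arithmetic checks out: the $s$-tuple degree bound $O(tn^{d-s})$, the dominance of the triangle count over the $2$-cycle count (since $3/(3d-1)<2/(2d-1)$), the cancellation of the $\log\log\log n$ factors, and the identity $3/(3d-1)-1/(d(3d-1))=1/d$. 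The one point you should tighten is the invocation of the uncrowded-hypergraph theorem with \emph{maximum} degree $D$: what your sampling computation controls is the \emph{expected} (hence, after Markov, the average) degree of $H[X]$, not the maximum degree, and individual vertex degrees need not concentrate. The standard fix is either to use the average-degree form of the theorem (which is how Duke--Lefmann--R\"odl and Kostochka et al.\ state it) or to delete the $o(pn)$ vertices of atypically large degree in the same cleaning step; either way the bound survives unchanged.
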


\begin{theorem}\label{thm:is}
For fixed $d\geq2$, we have $\beta (n, d) \gtrsim \left(n \log n\right)^{1/d}$.
\end{theorem}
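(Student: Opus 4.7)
The plan is to apply the reformulated Kostochka et al.\ theorem (Theorem~\ref{thm:rdegree}) to a $(d+1)$-uniform sub-hypergraph of the cell-incidence hypergraph $G(\bigh)$. Concretely, let $H$ be the hypergraph on vertex set $\bigh$ whose edges are exactly the simplicial cells of the arrangement, i.e., the cells with precisely $d+1$ bounding hyperplanes. Then $H$ is $(d+1)$-uniform, and Lemma~\ref{lem:degree} supplies the required codegree bound: every $d$-tuple of vertices of $H$ lies in at most $t = 2^d$ edges, a constant depending only on $d$.

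Applying Theorem~\ref{thm:rdegree} to $H$ then produces, with high probability, a subset $Y \subseteq \bigh$ of size at least
\[
  \left( \frac{n}{t} \log \frac{n}{t} \right)^{1/d} \gtrsim (n \log n)^{1/d},
\]
which is independent in $H$: no simplicial cell of the arrangement of $\bigh$ has all of its bounding hyperplanes in $Y$. Since $t$ depends only on $d$, the desired $(n \log n)^{1/d}$ bound falls out directly.

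The main obstacle is to promote simplicial-cell independence to independence in the full hypergraph $G(\bigh)$, which also has edges corresponding to non-simplicial cells (those with at least $d+2$ facets). My plan to handle this is twofold. First, I would assume without loss of generality that the arrangement is simple (no $d+1$ hyperplanes concurrent) by a preliminary perturbation in the spirit of the construction used in the proof of Lemma~\ref{lem:AP12}, so that the local structure around every vertex is an orthant decomposition into $2^d$ cells. Then I would bound the expected number of non-simplicial cells all of whose facet-hyperplanes are selected in the random sample $X$ produced by Step~1 of Theorem~\ref{thm:rdegree}: using the sampling probability $p$ and standard bounds on the cell complexity of hyperplane arrangements in $\mathbb{R}^d$, this expected count should be of strictly lower order than $(n \log n)^{1/d}$. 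Deleting one hyperplane per such violating cell during (or in addition to) Step~2 of Theorem~\ref{thm:rdegree} then converts $Y$ into an independent set of $G(\bigh)$ without affecting the asymptotic lower bound, finishing the proof.
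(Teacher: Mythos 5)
Your proposal follows essentially the same route as the paper: the same simplicial-cell hypergraph, the same codegree bound $t=2^d$ from Lemma~\ref{lem:degree}, the same application of Theorem~\ref{thm:rdegree}, and the same repair step of bounding the expected number of violating non-simplicial cells in the random sample $X$ (at most $p^c\cdot dn^d$ for cells with $c\ge d+2$ facets, which indeed vanishes) and deleting one hyperplane per such cell. One detail to add: cells bounded by at most $d$ hyperplanes are also hyperedges of $G(\bigh)$ and are covered neither by the simplicial-cell hypergraph nor by your count of cells with at least $d+2$ facets; the paper handles them separately by noting that such cells are necessarily unbounded, hence there are only $O(dn^{d-1})$ of them, so their expected number in $X$ is $p^d\cdot O(dn^{d-1})=o(n^{1/d})$ and they can be repaired the same way. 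Finally, the preliminary perturbation to a simple arrangement is better omitted: perturbing can enlarge the set of hyperplanes bounding a given cell, so an independent set of the perturbed arrangement need not pull back to one of the original arrangement, and the argument goes through without this reduction.
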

\begin{proof}
Let $\bigh$ be a set of $n$ hyperplanes in $\mathbb{R}^d$ and consider the $(d+1)$-uniform hypergraph
$H$ having one vertex for each hyperplane in $\bigh$, and a hyperedge of size $d+1$ for each set of $d+1$
hyperplanes forming a simplicial cell in the arrangement of $\bigh$. From Lemma~\ref{lem:degree},
every $d$-tuple of vertices of $H$ is contained in at most $t := 2^d$ edges.
Applying Theorem~\ref{thm:rdegree}, there is a subset $S$ of hyperplanes of size
$\Omega \left( ( (\frac{n}{2^d}) \log (\frac{n}{2^d}))^{1/d}\right)$ such that no simplicial cell
is bounded by hyperplanes of $S$ only.

However, there might be nonsimplicial cells of the arrangement that are bounded by hyperplanes of $S$ only.
Let $p$ be the probability used to define $X$ in Theorem~\ref{thm:rdegree}. It is known~\cite{Hal04}
that the total number of cells in an arrangement of $d$-dimensional hyperplanes is less than $dn^d$.
Hence for an integer $c\geq d+1$, the expected number of cells of size $c$ that are bounded by
hyperplanes of $X$ only is at most
$$
p^c d n^d \leq  \frac{n^{(4 - 3d) c /(3d-1)}} { (2^d \log \log \log n)^{3/(3d-1)}}\cdot d n^d
\lesssim  d n^{(4 - 3d) c  /(3d-1) + d} .
$$
Note that for $c\geq d+2$, the exponent of $n$ satisfies
$$
\frac{(4- 3d) c}{3d-1} + d < 0.
$$
Therefore the expected number of such cells of size at least $d+2$ is vanishing.

On the other hand we can bound the expected number of cells that are of size at most $d$, 
and that are bounded by hyperplanes of $X$ only, where the expectation is again with respect
to the choice of $X$. Note that cells of size $d$ are necessarily 
unbounded, and in a simple arrangement, no cell has size less than $d$. 
The number of unbounded cells in a $d$-dimensional arrangement is $O(dn^{d-1})$~\cite{Hal04}. 
Therefore, the number we need to bound is at most
$$
p^d O(dn^{d-1}) \lesssim n^{(4 - 3d) d  /(3d-1) + d-1}
\lesssim n^{1/(3d - 1)}  = o(n^{1/d}) .
$$

Consider now a maximum independent set $S$ in the hypergraph $H[Y]$, and for each cell that is bounded by hyperplanes of $S$ only, 
remove one of the hyperplane bounding the cell from $S$. Since $S\subseteq X$, the expected number 
of such cells is $o(n^{1/d})$, hence there exists an $X$ for which the number of remaining hyperplanes in $S\subseteq X$ 
is still $\Omega \left( (n\log n)^{1/d}\right)$, and they now form an independent set.
\end{proof}

We have the following coloring variant of Theorem~\ref{thm:is}.

\begin{corollary}
Hyperplanes of a simple arrangement of size $n$ in $\mathbb{R}^d$ for fixed $d\geq2$ can be colored with
$O\big( n^{1-1/d} / (\log n)^{1/d}\big)$ colors so that no cell is bounded by hyperplanes of a single color.
\end{corollary}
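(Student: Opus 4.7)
The plan is to build the coloring greedily by iterating Theorem~\ref{thm:is}. Initialize $\bigh_0 := \bigh$, and at step $i \geq 1$ apply Theorem~\ref{thm:is} to the subarrangement $\bigh_{i-1}$ to extract an independent set $S_i \subseteq \bigh_{i-1}$ of size $\Omega((n_{i-1} \log n_{i-1})^{1/d})$, where $n_{i-1} := |\bigh_{i-1}|$. Assign the hyperplanes in $S_i$ a fresh color, set $\bigh_i := \bigh_{i-1} \setminus S_i$, and iterate until $n_i$ drops below an absolute constant; color each surviving hyperplane with its own color at a total cost of $O(1)$. Note that a subarrangement of a simple arrangement remains simple, so Theorem~\ref{thm:is} is applicable at every step.

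The crucial verification is that each $S_i$, although produced as an independent set of the smaller arrangement $\bigh_{i-1}$, remains independent in the full arrangement $\bigh$. Suppose some cell $C$ of $\bigh$ had all its facets on hyperplanes of $S_i \subseteq \bigh_{i-1}$. Then no hyperplane of $\bigh \setminus \bigh_{i-1}$ could cross the interior of $C$ (since $C$ is a cell of $\bigh$), nor lie along a facet of $C$ (by the assumption on $S_i$). Hence removing the hyperplanes of $\bigh \setminus \bigh_{i-1}$ leaves $C$ intact as a cell of $\bigh_{i-1}$ with all facets on $S_i$, contradicting independence of $S_i$ in $\bigh_{i-1}$.

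Finally, I would count the colors by a geometric-series estimate. Partition the iterations into phases $k = 0, 1, 2, \ldots$ according to $n_i \in [n/2^{k+1}, n/2^k]$. Throughout phase $k$ each step removes $\Omega\big((n\log n)^{1/d} \cdot 2^{-k/d}\big)$ hyperplanes (using $\log(n/2^{k+1}) = \Theta(\log n)$ across the relevant range of $k$), and at most $n/2^{k+1}$ hyperplanes need to be disposed of in the phase, so phase $k$ uses at most $O\big(n^{1-1/d}(\log n)^{-1/d} \cdot 2^{-k(1 - 1/d)}\big)$ colors. Since $1 - 1/d > 0$ for $d \geq 2$, summing over $k \geq 0$ gives a convergent geometric series bounded by $O(n^{1-1/d}/(\log n)^{1/d})$, which easily absorbs the terminal $O(1)$ colors from the constant-size base case. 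I expect the only genuine subtlety to be the independence-preservation check in the second paragraph; after that, everything is a black-box invocation of Theorem~\ref{thm:is} combined with routine bookkeeping.
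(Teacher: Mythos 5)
Your proof is correct and takes essentially the same route as the paper: greedily peel off large independent sets via Theorem~\ref{thm:is} and bound the number of colors by a geometrically decaying sum (the paper packages the identical count as an induction on $n \mapsto n/2$), and your explicit check that an independent set of a subarrangement remains independent in the full arrangement is a detail the paper leaves implicit. The only slight imprecision is that $\log(n/2^{k+1}) = \Theta(\log n)$ fails for $k$ close to $\log_2 n$, but those late phases contribute only $O\big((n/\log n)^{1-1/d}\big) = O\big(n^{1-1/d}/(\log n)^{1/d}\big)$ colors even without the logarithmic gain (using $1-1/d \geq 1/d$ for $d\geq 2$), so the bound stands.
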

\begin{proof}
From Theorem~\ref{thm:is}, there always exists an independent set of hyperplanes of size at 
least $c\big(n\log n \big)^{1/d}$ for some constant $c$, where logarithms are base 2. 
We define a new constant $c'$ such that
$$
c' = \big( \frac 1c + c'\big) 2^{2/d-1} \Leftrightarrow c' = \frac{2^{2/d-1}}{c(1-2^{2/d-1})} .
$$
We now prove that $n$ hyperplanes forming a simple arrangement in $\mathbb{R}^d$ can be colored with
$c' ( n^{1-1/d} / (\log n)^{1/d}\big)$ colors so that no cell is bounded by hyperplanes of a single color. 
We proceed by induction and suppose this holds for $n/2$ hyperplanes. We apply the greedy algorithm and
iteratively pick a maximum independent set until there are at most $n/2$ hyperplanes left. We assign
a new color to each independent set, then use the induction hypothesis for the remaining hyperplanes. 
This clearly yields a proper coloring. 

Since every independent set has size at least $c\big(\frac n2\log\frac n2 \big)^{1/d}$, 
the number of iterations before we are left with at most $n/2$ hyperplanes is at most
$$
t\leq \frac{\frac n2}{c\big(\frac n2\log\frac n2 \big)^{1/d}}.
$$  
The number of colors is therefore at most
\begin{eqnarray*}
t + c' \left( \frac{\big(\frac n2\big)^{1-1/d}}{\big(\log \frac n2\big)^{1/d}}  \right)
 & \leq & \frac{\frac n2}{c\big(\frac n2\log\frac n2 \big)^{1/d}} + c' \left( \frac{\big(\frac n2\big)^{1-1/d}}{\big(\log \frac n2\big)^{1/d}}  \right) \\
 & = & \big( \frac 1c + c'\big) \left( \frac{\big(\frac n2\big)^{1-1/d}}{\big(\log \frac n2\big)^{1/d}}  \right) \\
 & \leq & \big( \frac 1c + c'\big) \left( 2^{2/d-1} \frac{ n^{1-1/d} }{ \big(\log n\big)^{1/d} }  \right) \\
 & = & c' \left( \frac{n^{1-1/d}}{\big(\log n\big)^{1/d}}  \right),
\end{eqnarray*}
as claimed. In the penultimate line, we used the fact that $\log \frac n2 > \frac 12 \log n$ for $n>4$.
\end{proof}

\section{Large Subsets in General Position or in a Hyperplane}

We wish to prove the following.

\begin{theorem}
\label{thm:main}
Fix $d\geq 2$. Every set of $n$ points in $\mathbb{R}^d$ with at most $\ell$ cohyperplanar points, where $\ell\lesssim n^{1/2}$, contains a subset of $\Omega\left(\left(n/\log \ell \right)^{1/d}\right)$ points in general position. That is,
$$\alpha (n, d, \ell) \gtrsim \left(n/\log \ell \right)^{1/d}
\mbox{ \rm for }\ell\lesssim\sqrt{n}.$$
\end{theorem}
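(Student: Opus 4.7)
The plan is to generalize the Payne--Wood argument from the plane to $\mathbb{R}^d$. Form the $(d+1)$-uniform hypergraph $H$ on vertex set $P$ whose hyperedges are the $(d+1)$-subsets of cohyperplanar points; an independent set of $H$ is exactly a subset of $P$ in general position. The task thus reduces to producing an independent set in $H$ of size $\Omega\bigl((n/\log\ell)^{1/d}\bigr)$. I would accomplish this by first bounding $|E(H)|$ and then applying a Tur\'an-type lower bound for hypergraph independence.

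For the edge bound, I would sort the hyperplanes $h$ with $|h\cap P|\geq d+1$ into $O(\log\ell)$ dyadic classes $\mathcal{H}_i$ according to $|h\cap P|\in[2^i,2^{i+1})$, and invoke a point-hyperplane incidence bound in $\mathbb{R}^d$---a higher-dimensional analogue of Szemer\'edi--Trotter of the form $|\mathcal{H}^{\geq k}| \lesssim n^d/k^{d+1} + n^{d-1}/k^{d-1}$, available via Agarwal--Aronov-type arguments. Each hyperplane in $\mathcal{H}_i$ contributes $\Theta(2^{i(d+1)})$ cohyperplanar $(d+1)$-tuples to $H$, so summing over $i$ yields
$$|E(H)| \;\lesssim\; n^d\log\ell + n^{d-1}\ell^2 \;\lesssim\; n^d\log\ell,$$
the last step using the hypothesis $\ell\lesssim\sqrt n$. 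Then I would apply Spencer's Tur\'an-type bound, which says that a $(d+1)$-uniform hypergraph on $n$ vertices with $m$ edges has independence number $\Omega\bigl(n^{1+1/d}/m^{1/d}\bigr)$; substituting $m\lesssim n^d\log\ell$ gives $\alpha(H) \gtrsim (n/\log\ell)^{1/d}$, as required.

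The main obstacle is the first step. In the planar case treated by Payne--Wood, the required incidence bound is precisely Szemer\'edi--Trotter and is unconditional. In $\mathbb{R}^d$ for $d\geq 3$, the analogous point-hyperplane bound typically carries a non-degeneracy hypothesis (for example, not too many points on a lower-dimensional flat), so the assumption $\ell\lesssim\sqrt n$ has to do the work of supplying this non-degeneracy, possibly after removing a small exceptional subset of $P$ or projecting onto lower-dimensional flats to control sub-maximal incidences. By contrast, the second step is routine: Spencer's greedy bound requires no linearity or girth condition on $H$, and yields the target exponent directly, so no random sampling or triangle-removal step analogous to that in Theorem~\ref{thm:is} is needed here.
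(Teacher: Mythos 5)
Your overall architecture matches the paper's: build the $(d+1)$-uniform hypergraph of cohyperplanar tuples, prove the edge bound $m\lesssim n^d\log\ell$ via a dyadic decomposition over hyperplane richness plus an incidence bound, and finish with Spencer's Tur\'an-type theorem. The Spencer step and the final arithmetic are correct. But the part you flag as ``the main obstacle'' is in fact the entire content of the theorem for $d\geq 3$, and your proposed resolutions of it do not work. The relevant incidence bound (Elekes--T\'oth, Theorem~\ref{thm:ET}) counts only $k$-rich hyperplanes that are \emph{$\gamma$-degenerate} in the paper's sense (no more than a $\gamma$ fraction of their points on a $(d-2)$-flat), and the hypothesis $\ell\lesssim\sqrt n$ does \emph{not} supply this non-degeneracy: a hyperplane with $k\leq\ell$ points can have all but one of them on a single $(d-2)$-flat, and there can be many such hyperplanes. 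The role of $\ell\lesssim\sqrt n$ is only to kill the secondary term $n^{d-1}\ell^2$ in the dyadic summation, exactly as in the planar case. Likewise, ``removing a small exceptional subset of $P$'' cannot work, because the degenerate hyperplanes can collectively involve every point of $P$.

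What is actually needed, and what the paper does in Lemma~\ref{lem:sparse}, is an induction on the dimension $d$ together with a three-way classification of the $(d+1)$-tuples: (1) tuples lying entirely in a $(d-2)$-flat spanned by $P$, handled by a generic projection to $\mathbb{R}^{d-1}$ and the inductive hypothesis; (2) tuples spanning a $\gamma$-degenerate hyperplane, handled by the Elekes--T\'oth bound exactly as in your dyadic sum; and (3) tuples spanning a non-degenerate hyperplane $H$, for which more than a $\gamma$ fraction of $H\cap P$ lies on a $(d-2)$-flat $L(H)$. For type (3) one must observe that each such tuple contains a point off $L(H)$, so a hyperplane with $k$ points on $L(H)$ and $r$ points off it contributes only $O(rk^d)$ tuples; summing $\sum_r n_r r\leq n$ over the hyperplanes through a fixed $L$, and then over all $(d-2)$-flats weighted by $s_k$ (again via the inductive hypothesis), gives $O(n^d\log\ell)$. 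Your mention of ``projecting onto lower-dimensional flats'' gestures at the right mechanism for case (1), but without the classification, the per-hyperplane count $O(rk^d)$ for case (3), and the induction that makes the $(d-2)$-flat counts $s_k$ available, the edge bound is not established, and the proof is incomplete at its crucial step.
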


This is a higher-dimensional version of the result by Payne and Wood~\cite{PW13}.
The following Ramsey-type statement is an immediate corollary.

\begin{corollary}
\label{cor:Ramsey3d}
For fixed $d\geq2$ there is a constant $c$ such that every set of at least $cq^d\log q$ points in $\mathbb{R}^d$ contains $q$ cohyperplanar points or $q$ points in general position.
\end{corollary}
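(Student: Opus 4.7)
The plan is to derive the corollary as an immediate consequence of Theorem~\ref{thm:main} by dichotomizing on whether many points lie on a common hyperplane. Let $P$ be a set of $n \geq cq^d \log q$ points in $\mathbb{R}^d$, where the constant $c = c(d)$ will be chosen at the end. If some hyperplane contains at least $q$ points of $P$, the first alternative of the corollary already holds, so assume from now on that every hyperplane meets $P$ in at most $\ell := q-1$ points.

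Before invoking Theorem~\ref{thm:main}, I would verify its hypothesis $\ell \lesssim \sqrt{n}$. Since $\ell < q$ and $n \geq cq^d \log q$, we have $n/\ell^2 \geq c q^{d-2} \log q$, which is bounded below for all $d \geq 2$ and sufficiently large $q$ (noting that for $d=2$ the factor $\log q$ by itself suffices, while for $d \geq 3$ the factor $q^{d-2}$ dominates). Hence the hypothesis is satisfied whenever $q$ (equivalently $n$) is large enough, and small cases can be absorbed into $c$.

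Applying Theorem~\ref{thm:main} then yields a subset of $P$ in general position of size at least
\[
\alpha(n,d,\ell) \;\geq\; c_0 \left(\frac{n}{\log \ell}\right)^{1/d}
\;\geq\; c_0 \left(\frac{cq^d \log q}{\log(q-1)}\right)^{1/d}
\;\geq\; c_0\, c^{1/d}\, q,
\]
for some constant $c_0 = c_0(d)>0$ coming from Theorem~\ref{thm:main} and all $q \geq 3$ (say), where in the last step I used $\log q / \log(q-1) \geq 1$. Choosing $c := c_0^{-d}$ (and enlarging it to cover the finitely many small values of $q$ for which the $\lesssim$ bound or the final inequality fails) gives a subset of $P$ in general position of size at least $q$.

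There is no real obstacle here: the work was done in Theorem~\ref{thm:main}. The only points requiring any care are verifying the range condition $\ell \lesssim \sqrt{n}$ across all $d \geq 2$ and tracking constants so that the final coefficient $c_0 \, c^{1/d}$ reaches $1$; both are handled by making $c$ sufficiently large depending on $d$.
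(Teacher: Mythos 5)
Your proof is correct and is precisely the derivation the paper intends: the paper states the corollary as an ``immediate'' consequence of Theorem~\ref{thm:main} without writing out the dichotomy, and your argument (either some hyperplane has $q$ points, or set $\ell = q-1$, verify $\ell \lesssim \sqrt{n}$, and apply the theorem with $c$ chosen to absorb the constants) is exactly the intended route.
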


In order to give some intuition about Corollary~\ref{cor:Ramsey3d}, it is worth mentioning an easy
proof when $cq^d\log q$ is replaced by $q\cdot {q\choose d}$. Consider a set of $n = q\cdot {q\choose d}$ points in $\mathbb{R}^d$, and let $S$ be a maximal subset in general position. Either $|S|\geq q$ and we are done, or $S$ spans  ${|S|\choose d}\leq {q\choose d}$ hyperplanes, and, by maximality, every point lies on at least one of these hyperplanes. Hence by the pigeonhole principle, one of the hyperplanes in $S$ must contain at least $n/{q\choose d} =q$ points.

\medskip
We now use known incidence bounds to estimate the maximum number of cohyperplanar $(d+1)$-tuples in a point set. In what follows we consider a finite set $P$ of $n$ points in $\mathbb{R}^d$ such that at most $\ell$ points of $P$ are cohyperplanar, where $\ell := \ell (n) \lesssim n^{1/2}$ is a fixed function of $n$. For $d\geq 3$, a hyperplane $h$ is said to be \emph{$\gamma$-degenerate} if at most $\gamma\cdot |P\cap h|$ points in $P\cap h$ lie on a $(d-2)$-flat. A flat is said to be $k$-rich whenever it contains \emph{at least} $k$ points of $P$. The following is a standard reformulation of the classic Szemer\'edi-Trotter theorem on point-line incidences in the plane~\cite{ST83}.

\begin{theorem}[Szemer\'edi and Trotter~\cite{ST83}]
\label{thm:ST}
For every set of $n$ points in $\mathbb{R}^2$, the number of $k$-rich lines is at most
$$ O \left( \frac{n^2}{k^3} + \frac nk\right).$$
This bound is the best possible apart from constant factors.
\end{theorem}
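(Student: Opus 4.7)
The plan is to prove the upper bound by first establishing the classical Szemer\'edi--Trotter incidence estimate $I(P,L) \lesssim n^{2/3}|L|^{2/3} + n + |L|$ via Sz\'ekely's crossing-number argument, and then deducing the $k$-rich count by a one-line double-counting step. Tightness will follow from a lattice construction together with a Farey-type slope count.

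For the incidence bound I would start from the crossing-number inequality of Ajtai--Chv\'atal--Newborn--Szemer\'edi and Leighton: any simple graph $G = (V,E)$ drawn in the plane with $|E| \geq 4|V|$ has crossing number $\mathrm{cr}(G) \gtrsim |E|^3/|V|^2$. Given the point set $P$ of size $n$ and a set $L$ of lines each containing at least two points of $P$, draw a graph whose vertices are the points of $P$ and whose edges are the segments between consecutive points of $P$ along each line of $L$. A line carrying $p_\ell$ points of $P$ contributes $p_\ell - 1$ edges, so $|E| = I(P,L) - |L|$, and crossings can only occur where two lines of $L$ meet, yielding $\mathrm{cr}(G) \leq \binom{|L|}{2}$. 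Either $|E| < 4n$, in which case $I(P,L) \lesssim n + |L|$, or the crossing-number inequality forces $|L|^2 \gtrsim |E|^3 / n^2$, so $|E| \lesssim n^{2/3}|L|^{2/3}$. In both cases $I(P,L) \lesssim n^{2/3}|L|^{2/3} + n + |L|$.

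To pass to the $k$-rich bound, let $L_k$ be the set of lines containing at least $k$ points of $P$. Each such line contributes at least $k$ to $I(P,L_k)$, so
$$ k\,|L_k| \leq I(P, L_k) \lesssim n^{2/3}|L_k|^{2/3} + n + |L_k|. $$
Splitting according to which term on the right dominates, one gets $|L_k| \lesssim n^2/k^3$, $|L_k| \lesssim n/k$, or $k \lesssim 1$ (the last absorbed into the other two), and combining yields $|L_k| \lesssim n^2/k^3 + n/k$.

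For the matching lower bound I would take an $a \times b$ integer grid with $ab = \Theta(n)$, and for each primitive direction $(p,q)$ with $|p|,|q|$ appropriately bounded, use the pencil of parallel grid lines in that direction; a Farey-type count produces $\Theta(n^2/k^3)$ lines each meeting the grid in $\Theta(k)$ points throughout the range $k \lesssim \sqrt{n}$, while a single pencil of $n/k$ horizontal lines handles the regime where $n/k$ dominates. The main technical nuisance is the boundary case $|E| \approx 4|V|$ of the crossing-number inequality, which I would manage by treating the sparse-edge regime separately and folding its contribution into the additive $n + |L|$ error term; calibrating the arithmetic so that the $n^2/k^3$ and $n/k$ regimes in the lower bound meet cleanly at $k \approx \sqrt{n}$ is the other step requiring care.
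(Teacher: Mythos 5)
The paper does not prove Theorem~\ref{thm:ST}; it is quoted as a known result of Szemer\'edi and Trotter and used as a black box in the base case of Lemma~\ref{lem:sparse}, so there is no in-paper argument to compare against. Your proposal is the standard modern proof (Sz\'ekely's crossing-number argument) and is correct in all essentials: the drawing with $|E| = I(P,L) - |L|$ edges and at most $\binom{|L|}{2}$ crossings, the case split on $|E| < 4|V|$, the deduction $k\,|L_k| \leq I(P,L_k)$, and the three-way case analysis all go through. Two small points are worth making explicit when you write it up: the statement implicitly requires $k \geq 2$ (for $k=1$ the count of rich lines is unbounded), and in the third branch of your trichotomy, where $k \lesssim 1$, the ``absorption'' works because for bounded $k \geq 2$ one has the trivial bound $|L_k| \leq \binom{n}{2} \lesssim n^2/k^3$. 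The lower-bound constructions --- the integer grid with a Farey-type count of rich directions for $k \lesssim \sqrt{n}$, and a single pencil of $n/k$ parallel lines for larger $k$ --- are likewise the standard Erd\H{o}s examples and together match both terms of the upper bound, so the tightness claim is covered as well.
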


Elekes and T\'oth proved the following higher-dimensional version, involving an additional non-degeneracy condition.

\begin{theorem}[Elekes and T\'oth~\cite{ET05}]
\label{thm:ET}
For every integer $d\geq 3$, there exist constants $C_d>0$ and $\gamma_d>0$ such that for every set of $n$ points in $\mathbb{R}^d$, the number of $k$-rich $\gamma_d$-degenerate planes is at most
$$ C_d \left( \frac{n^d}{k^{d+1}} + \frac{n^{d-1}}{k^{d-1}}\right) .$$
This bound is the best possible apart from constant factors.
\end{theorem}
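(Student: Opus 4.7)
The plan is to prove the theorem by induction on $d$, using Theorem~\ref{thm:ST} (Szemer\'edi--Trotter) as the base case and the degeneracy hypothesis at each level to save an extra factor over naive counting. I expect the base case $d = 3$ to be the heart of the argument, with the inductive step a comparatively routine projection.

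For the base case $d=3$, I would fix a $k$-rich $\gamma$-degenerate plane $H$ and apply Theorem~\ref{thm:ST} \emph{inside} $H \cong \mathbb{R}^2$ to the point set $P \cap H$. The $\gamma$-degeneracy bounds the size of any rich line in $H$, so summing $\binom{j}{3}$ times the ST estimate for $j$-rich lines bounds the collinear triples in $H$ by $O(\gamma^2 k_H^3)$; for $\gamma_3$ sufficiently small, the non-collinear triples in $P \cap H$ then number at least $\Omega(k^3)$. Double counting non-collinear triples---each of which determines a unique plane in $\mathbb{R}^3$---gives a first-pass bound
\[
\Omega(k^3) \cdot |\Pi| \;\leq\; \binom{n}{3},
\]
so $|\Pi| = O(n^3/k^3)$, a factor of $k$ weaker than claimed. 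To sharpen this, I would upgrade ``non-collinear triple'' to a richer configuration, such as a $4$-tuple with no three collinear all lying in a common plane: ST inside $H$ would again give $\Omega(k^4)$ such configurations per plane, and I would hope to bound the total number globally by $O(n^3)$ via a cell-decomposition or Clarkson--Shor partition of $\mathbb{R}^3$. The additive $n^{d-1}/k^{d-1}$ term I expect to arise from a separate bound dominating in the regime $k \gtrsim \sqrt{n}$, where a few very rich planes account for most of the configurations.

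For the inductive step $d-1 \to d$, the plan is to take a generic central projection from a point $q \notin P$ onto a hyperplane $\Sigma \subset \mathbb{R}^d$. Hyperplanes of $\mathbb{R}^d$ not passing through $q$ project bijectively to hyperplanes of $\Sigma$, and a careful choice of $q$ should guarantee that $(d-2)$-flats project generically to $(d-3)$-flats, so $\gamma_d$-degeneracy in $\mathbb{R}^d$ would imply $\gamma_{d-1}$-degeneracy in $\Sigma$ after a controlled constant loss. Applying the induction hypothesis in $\Sigma$ would then bound the number of relevant hyperplanes in $\mathbb{R}^d$ by the claimed quantity, up to handling the negligible set of hyperplanes through $q$.

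The hard part will be the factor-$k$ sharpening in the base case: naive triple counting is provably off by $k$, and closing this gap requires using the degeneracy condition together with a global point--hyperplane incidence bound in $\mathbb{R}^3$---not just the planar bound applied inside each $H$. I would expect to need two levels of incidence bounds (point--line within each plane and point--plane globally) interlocked in a way that propagates the degeneracy assumption throughout. A secondary technical issue is verifying that the generic central projection really does preserve $\gamma$-degeneracy for all but a negligible family of hyperplanes; this requires controlling the set of exceptional $(d-2)$-flats whose images collapse in dimension.
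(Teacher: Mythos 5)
First, a point of comparison: the paper offers no proof of this statement at all --- Theorem~\ref{thm:ET} is imported from Elekes and T\'oth~\cite{ET05} as a black box --- so your attempt can only be judged on its own terms, and on those terms it has two genuine gaps. The first is the one you flag yourself: double counting non-collinear triples gives only $O(n^3/k^3)$, and your proposed repair --- counting coplanar $4$-tuples with no three collinear and bounding their total number globally by $O(n^3)$ --- is not available. Each such $4$-tuple spans a unique plane, so the only free global bound is $\binom{n}{4}$, and the double count then yields $|\Pi|\cdot\Omega(k^4)\leq O(n^4)$, i.e.\ $O(n^4/k^4)$, which is now off by a factor of $n$. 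A bound of the form ``$O(n^d\cdot\mathrm{polylog})$ spanning $(d+1)$-tuples'' is precisely what Lemma~\ref{lem:sparse} of this paper \emph{derives from} Theorem~\ref{thm:ET}, so invoking such a bound here is circular; closing the factor-$k$ gap is the entire content of the Elekes--T\'oth argument, and a gesture toward Clarkson--Shor does not supply it.

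The second gap is that your inductive step rests on a false geometric claim. Under central projection from a point $q$ onto a hyperplane $\Sigma$, a hyperplane $H$ of $\mathbb{R}^d$ with $q\notin H$ does \emph{not} map to a hyperplane of $\Sigma$: the union of the lines joining $q$ to the points of $H$ covers all of $\mathbb{R}^d$ apart from a lower-dimensional exceptional set, so the image of $H$ is essentially all of $\Sigma$, and the $k$ points of $P\cap H$ land in $\Sigma$ with no common $(d-2)$-flat containing them. It is the hyperplanes \emph{through} the center of projection that map to hyperplanes of $\Sigma$, which is why arguments of this type (including the Type~1 and Type~3 analyses in the proof of Lemma~\ref{lem:sparse}) organize hyperplanes by a rich lower-dimensional flat or point of $P$ that they contain and project from there, rather than from a generic external point. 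As written, neither the base case nor the induction goes through.
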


We prove the following upper bound on the number of cohyperplanar $(d+1)$-tuples in a point set.

\begin{lemma}
\label{lem:sparse}
Fix $d\geq2$. Let $P$ be a set of $n$ points in $\mathbb{R}^d$ with no more than $\ell$ in a hyperplane, where $\ell\in O(n^{1/2})$. Then the number of cohyperplanar $(d+1)$-tuples in $P$ is $O\left( n^d \log \ell \right)$.
\end{lemma}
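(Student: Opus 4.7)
The plan is to prove Lemma~\ref{lem:sparse} by induction on $d$, combining a dyadic richness decomposition with Theorems~\ref{thm:ST} and~\ref{thm:ET}, in the spirit of Payne and Wood's argument for $d=2$. For the base case $d=2$, every collinear triple lies on a unique line, so the count equals $\sum_{\ell'}\binom{|P\cap \ell'|}{3}$; partitioning lines dyadically by their richness $k\in[3,\ell]$ and applying Theorem~\ref{thm:ST} yields
\[
\sum_{k\text{ dyadic}} O\!\left(\frac{n^2}{k^3}+\frac{n}{k}\right)\cdot O(k^3) = O(n^2\log \ell + n\ell^2) = O(n^2\log\ell),
\]
using $\ell\lesssim\sqrt n$. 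This is Payne and Wood's original argument.

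For the inductive step $d\geq 3$, I would partition the hyperplanes $h$ with $|P\cap h|\geq d+1$ according to whether they are $\gamma_d$-degenerate in the sense of Theorem~\ref{thm:ET}. For the $\gamma_d$-degenerate hyperplanes the same dyadic scheme as in the base case works verbatim with Theorem~\ref{thm:ET} replacing Theorem~\ref{thm:ST}: at scale $k$ there are $O(n^d/k^{d+1}+n^{d-1}/k^{d-1})$ such hyperplanes, each contributing $O(k^{d+1})$ tuples, for a total of $O(n^d\log\ell+n^{d-1}\ell^2)=O(n^d\log\ell)$.

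The remaining hyperplanes each carry a distinguished $(d-2)$-flat $F_h$ containing more than $\gamma_d|P\cap h|$ of its points. I would split the $(d+1)$-tuples on such an $h$ into those contained in some $(d-2)$-flat and those not. The former, after a generic projection $\pi\colon\mathbb{R}^d\to\mathbb{R}^{d-1}$ that sends $(d-2)$-flats to hyperplanes while preserving incidences, become cohyperplanar $(d+1)$-subsets of hyperplanes of $\pi(P)$; combining the elementary identity $\binom{m}{d+1}\leq\tfrac{\ell}{d+1}\binom{m}{d}$ with the inductive hypothesis in dimension $d-1$ bounds their contribution by $O(n^{d-1}\ell\log\ell)=O(n^{d-1/2}\log\ell)$, which is absorbed in $O(n^d\log\ell)$ thanks to $\ell\lesssim\sqrt n$. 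The latter tuples have affine hull equal to $h$ itself and are charged to the points of $P\cap h\setminus F_h$ through the pencil of hyperplanes containing $F_h$; the fact that this pencil partitions $P\setminus F_h$ keeps the total under $O(n^d\log\ell)$.

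The delicate point is the reduction to a lower dimension for degenerate hyperplanes: the inductive hypothesis in dimension $d-1$ bounds cohyperplanar $d$-tuples whereas we need to control $(d+1)$-tuples on $(d-2)$-flats of $\mathbb{R}^d$, so the dimension shift must be absorbed both by the generic projection and by the extra factor $\ell/(d+1)$, which survives only because $\ell\lesssim\sqrt n$. Additional care is needed to avoid double-counting $(d+1)$-tuples lying simultaneously on several hyperplanes through a common $(d-3)$-subflat.
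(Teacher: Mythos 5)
Your proposal is correct and follows essentially the same route as the paper: induction on $d$, Szemer\'edi--Trotter for the base case, a dyadic application of the Elekes--T\'oth bound for the $\gamma_d$-degenerate hyperplanes, a generic projection plus the induction hypothesis in $\mathbb{R}^{d-1}$ for the tuples lying in $(d-2)$-flats, and a pencil/charging argument for the remaining hyperplanes. The only step you leave implicit is that closing this last case needs the non-degeneracy inequality $|P\cap h\setminus F_h| < (\gamma_d^{-1}-1)\,|P\cap F_h|$ (so each charged point accounts for only $O(k^d)$ tuples, $k=|P\cap F_h|$) together with a second invocation of the induction hypothesis to bound $\sum_k s_k k^d$ over all $(d-2)$-flats --- exactly as in the paper's Type~3 computation.
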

\begin{proof}
We proceed by induction on $d\geq2$. The base case $d=2$ was established by Payne and Wood~\cite{PW13}, using the Szemer\'edi-Trotter bound (Theorem~\ref{thm:ST}). We reproduce it here for completeness. We wish to bound the number of collinear triples in a set $P$ of $n$ points in the plane. Let $h_k$ be the number of lines containing exactly $k$ points of $P$. The number of collinear 3-tuples is
\begin{eqnarray*}
\sum_{k=3}^{\ell} h_k {k\choose 3} & \leq & \sum_{k=3}^{\ell} k^2 \sum_{i=k}^{\ell} h_i \\
& \lesssim & \sum_{k=3}^{\ell} k^2 \left( \frac{n^2}{k^{3}} + \frac nk \right) \\
& \lesssim  & n^2\log \ell + \ell^2 n \lesssim  n^2\log \ell.
\end{eqnarray*}

We now consider the general case $d\geq 3$. Let $P$ be a set of $n$ points in $\mathbb{R}^d$, no $\ell$ in a hyperplane, where $n\geq d+2$ and $\ell\lesssim \sqrt{n}$. let $\gamma := \gamma_d>0$
be a constant specified in Theorem~\ref{thm:ET}. We distinguish the following three types of $(d+1)$-tuples:

\medskip\paragraph{\bf\boldmath Type~1: $(d+1)$-tuples contained in some $(d-2)$-flat spanned by $P$.}
Denote by $s_k$ the number of $(d-2)$-flats spanned by $P$ that contain exactly $k$ points of $P$. Project $P$ onto a $(d-1)$-flat in a generic direction to obtain a set of points $P'$ in $\mathbb{R}^{d-1}$. Now $s_k$ is the number of hyperplanes of $P'$ containing exactly $k$
points of $P'$. By applying the induction hypothesis on $P'$, the number of cohyperplanar $d$-tuples is %
$$
\sum_{k=d}^{\ell} s_k {k\choose d} \lesssim n^{d-1/2} \log \ell .
$$
Hence the number of $(d+1)$-tuples of $P$ lying in a $(d-2)$-flat spanned by $P$ satisfies
$$
\sum_{k=d+1}^{\ell} s_k {k\choose d+1} \lesssim \ell n^{d-1} \log \ell \leq n^d\log\ell .
$$

\medskip\paragraph{\bf\boldmath Type~2: $(d+1)$-tuples of $P$ that span a $\gamma$-degenerate hyperplane.}
Let $h_k$ be the number of $\gamma$-degenerate hyperplanes containing exactly $k$ points of $P$. By Theorem~\ref{thm:ET},
\begin{eqnarray*}
\sum_{k=d+1}^{\ell} h_k {k\choose d+1} & \leq & \sum_{k=d+1}^{\ell} k^d \sum_{i=k}^{\ell} h_i \\
& \lesssim & \sum_{k=d+1}^{\ell} k^d \left( \frac{n^d}{k^{d+1}} + \frac{n^{d-1}}{k^{d-1}} \right) \\
& \lesssim & n^d\log \ell + \ell^2 n^{d-1} \lesssim n^d\log \ell .
\end{eqnarray*}

\medskip\paragraph{\bf\boldmath Type~3: $(d+1)$-tuples of $P$ that span a hyperplane that is not $\gamma$-degenerate.}
Recall that if a hyperplane $H$ panned by $P$ is not $\gamma$-degenerate, then more than a $\gamma$ fraction of its points lie in a $(d-2)$-flat $L(H)$. We may assume that $L(H)$ is also spanned by $P$. Consider a $(d-2)$-flat $L$ spanned by $P$ and containing exactly $k$ points of $P$. The hyperplanes spanned by $P$ that contain $L$ partition $P\setminus L$. Let $n_r$ be the number of hyperplanes containing $L$ and exactly $r$ points of $P\setminus L$. We have $\sum_{r=1}^{\ell} n_r r \leq n$.

If a hyperplane $H$ is not $\gamma$-degenerate, contains a $(d-2)$-flat $L=L(H)$ with exactly $k$ points, and $r$ other points of $P$, then $k> \gamma (r+k)$, hence $r< (\frac 1{\gamma}-1)k$. Furthermore, all $(d+1)$-tuples that span $H$ must contain at least one point that is not in $L$. Hence the number of $(d+1)$-tuples that span $H$ is at most $O(rk^d)$. The total number of $(d+1)$-tuples of type~3 that span a hyperplane $H$ with a common $(d-2)$-flat $L=L(H)$ is is therefore at most
$$
\sum_{r=1}^{\ell} n_r r k^d \leq nk^d.
$$
Recall that $s_k$ denotes the number of $(d-2)$-flats containing exactly $k$ points. Summing
over all such $(d-2)$-flats and applying the induction hypothesis yields the following upper bound
on the total number of $(d+1)$-tuples spanning hyperplanes that are not $\gamma$-degenerate:
$$
\sum_{k=d+1}^{\ell} s_k n k^d \lesssim n^d \log \ell .
$$

Summing over all three cases, the total number of cohyperplanar $(d+1)$-tuples is $O(n^d\log \ell)$ as claimed.
\end{proof}

In the plane, Lemma~\ref{lem:sparse} gives an $O(n^2 \log \ell)$ bound for the number of collinear triples in an $n$-element point set with no $\ell$ on a line, where $\ell\in O(\sqrt{n})$. This bound is tight for $\ell=\Theta(\sqrt{n})$ for a $\lfloor \sqrt{n}\rfloor\times \lfloor \sqrt{n}\rfloor$ section of the integer lattice. It is almost tight for $\ell\in \Theta(1)$, Solymosi and Soljakovi\'c~\cite{SS13} recently constructed $n$-element point sets for every constant $\ell$ and $\varepsilon>0$ that contains at most $\ell$ points on a line and $\Omega(n^{1-\varepsilon})$ collinear $\ell$-tuples, hence $\Omega(n^{1-\varepsilon}{\ell\choose 3})\subset \Omega(n^{1-\varepsilon})$ collinear triples.

Armed with Lemma~\ref{lem:sparse}, we now apply the following standard result from hypergraph theory due to Spencer~\cite{S72}.

\begin{theorem}[Spencer~\cite{S72}]
\label{thm:spencer}
Every $r$-uniform hypergraph with $n$ vertices and $m$ edges contains an independent set of size at least
\begin{equation}\label{eq:spencer}
\frac{r-1}{r^{r/(r-1)}} \frac{n}{\left(\frac mn\right)^{1/(r-1)}} .
\end{equation}
\end{theorem}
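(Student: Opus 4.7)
The plan is to use the classical probabilistic deletion (``alteration'') argument. Given the $r$-uniform hypergraph $H=(V,E)$ with $|V|=n$ and $|E|=m$, I first form a random set $S \subseteq V$ by including each vertex independently with probability $p \in [0,1]$, where $p$ is to be chosen at the end of the argument. Let $X = |S|$ and let $Y$ denote the number of hyperedges of $H$ whose vertex set lies entirely in $S$. By linearity of expectation and $r$-uniformity (so that $\Pr[e \subseteq S] = p^r$ for each edge $e$), one has $\mathbb{E}[X] = pn$ and $\mathbb{E}[Y] = p^r m$.

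Next, from any realization of $S$, I would delete a single vertex from each hyperedge of $H$ that lies entirely in $S$, producing a set $I \subseteq S$ that is independent in $H$ and satisfies $|I| \geq X - Y$. Taking expectations gives $\mathbb{E}[|I|] \geq pn - p^r m$, and hence some realization yields an independent set of size at least $f(p) := pn - p^r m$.

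It remains to optimize $f(p)$ over $p \in [0,1]$. Setting $f'(p) = n - r p^{r-1} m = 0$ gives $p^\ast = (n/(rm))^{1/(r-1)}$. Using the identity $(p^\ast)^{r-1} = n/(rm)$, the quantity $f(p^\ast) = p^\ast\bigl(n - m(p^\ast)^{r-1}\bigr)$ collapses to $p^\ast \cdot n \cdot (r-1)/r$, which simplifies algebraically to the stated bound $\frac{r-1}{r^{r/(r-1)}} \cdot \frac{n}{(m/n)^{1/(r-1)}}$.

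The only technical point is to check that $p^\ast \in [0,1]$, which amounts to $m \geq n/r$; this is precisely the interesting regime, because when $m < n/r$ the claimed expression either already exceeds $n$ (and is vacuously dominated by any maximal independent set) or is superseded by the trivial estimate $|I| \geq n - m$ obtained by taking $S = V$. I expect the only genuine calculation to be the substitution of $p^\ast$ yielding the constant $(r-1)/r^{r/(r-1)}$; everything else is a textbook probabilistic-method argument, and I do not anticipate a conceptual obstacle.
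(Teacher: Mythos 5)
The paper states Theorem~\ref{thm:spencer} as a cited result of Spencer and offers no proof of its own, so there is no internal argument to compare against; judged on its own terms, your deletion argument is the standard proof and the computation is correct: $f(p)=pn-p^rm$ is maximized at $p^\ast=(n/(rm))^{1/(r-1)}$, where $f(p^\ast)=\frac{r-1}{r}\,p^\ast n=\frac{r-1}{r^{r/(r-1)}}\cdot\frac{n}{(m/n)^{1/(r-1)}}$, exactly \eqref{eq:spencer}. The one weak spot is your handling of the regime $m<n/r$, where $p^\ast>1$. Both of your proposed outs fail there. First, if the expression \eqref{eq:spencer} exceeds $n$, the conclusion is not ``vacuously dominated'' but simply false as stated: no independent set has more than $n$ vertices (for instance $r=3$, $n=6$, $m=1$ gives a bound of about $5.66$, while the largest independent set has $5$ vertices). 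Second, the trivial estimate $n-m$ never supersedes \eqref{eq:spencer} in that regime: as a function of $m$ the bound \eqref{eq:spencer} is convex and is tangent to the line $m\mapsto n-m$ at $m=n/r$, hence is at least $n-m$ for all $m$, with equality only at $m=n/r$. The honest repair is the one found in standard statements of Spencer's theorem, namely the hypothesis $m\geq n/r$ (equivalently $p^\ast\leq 1$), under which your argument is complete. This costs nothing for the present paper, since the theorem is invoked with $m=\Theta(n^d\log\ell)$, which is far above $n/r$; but you should state the hypothesis rather than argue that the small-$m$ case takes care of itself.
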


\begin{proof}[Proof of Theorem~\ref{thm:main}]
We apply Theorem~\ref{thm:spencer} to the hypergraph formed by considering all cohyperplanar $(d+1)$-tuples in a given set of $n$ points in $\mathbb{R}^d$, with no $\ell$ cohyperplanar. Substituting $m\lesssim n^d\log \ell$ and $r=d+1$ in \eqref{eq:spencer}, we get a lower bound
$$
\frac{n} { \left( n^{d-1}\log \ell \right)^{1/d} }
= \left( \frac{n}{\log \ell}\right)^{1/d},
$$
for the maximum size of a subset in general position, as desired.
\end{proof}

\bibliographystyle{plain}
\bibliography{independent_hyperplanes}

\end{document}